\documentclass[12pt, reqno]{amsart}
\usepackage[utf8]{inputenc}
\usepackage{amsmath}
\usepackage{amsfonts}
\usepackage{amssymb}
\usepackage{amsthm}
\usepackage{bm}
\usepackage{bbm}
\usepackage{xcolor}
\usepackage{hyperref}
\numberwithin{equation}{section}
\theoremstyle{plain}
\newtheorem{theorem}{Theorem}[section]
\newtheorem{lemma}[theorem]{Lemma}
\newtheorem{corollary}[theorem]{Corollary}

\newtheorem{conjecture}[theorem]{Conjecture}
\theoremstyle{definition}

\theoremstyle{remark}

\newcommand{\Z}{\mathbb{Z}}
\newcommand{\ex}{\mathcal{E}}

\title[Sums of primes, squares, and cubes]{Computational Results on Sums of a Prime with Squares or Cubes}
\address{Brigham Young University, Department of Mathematics, Provo, UT 84602, USA}
\author{Kenny Applegate}
\email{kapplegate2020@gmail.com}
\author{Kyle Pratt}
\email{kyle.pratt@mathematics.byu.edu}

\subjclass[2020]{11D85, 11P32, 11Y11}
\keywords{prime, square, cube, elliptic curve}

\allowdisplaybreaks

\date{}

\begin{document}

\begin{abstract}
One conjectures that all integers greater than 14 are the sum of an odd prime and two positive squares. We prove that all integers $n$ with $14<n<10^{26}$ can be represented in this manner. Along the way, we investigate sums of a prime and one square and then ``bootstrap'' to get our results on sums of a prime and two squares. We also show that all integers between $308$ and $10^{16}$ can be represented as the sum of an odd prime and two positive cubes. Our methods require the determination of the integral points on various elliptic curves.
\end{abstract}

\maketitle

\section{Introduction}
In additive number theory, one studies how numbers can be represented by summands from subsets of the integers. There are several famous examples of additive number theory problems, the most notable of which is probably Waring's problem (see \cite{VW2000} for a survey). Waring's problem asks how many nonnegative $k$th powers are needed to represent every positive integer as a sum of $k$th powers. It is known, for example, that every positive integer can be written as the sum of at most four squares or nine nonnegative cubes. In the case of cubes, it is conjectured that every sufficiently large integer may be written as the sum of at most four nonnegative cubes. Furthermore, while 454 requires at least eight cubes, every integer larger than 454 can be written as the sum of at most seven cubes (see \cite{Siksek2016} for further discussion).

Another well-known additive number theory problem is the Goldbach conjecture, which states that every even integer greater than two can be written as the sum of two primes. This conjecture is still wide open, but has been computationally verified for integers $\leq 4\cdot 10^{18}$ (see \cite{Goldbach2014}). Closely related to this is the ternary Goldbach conjecture, which states that every odd integer greater than seven can be written as the sum of three odd primes. In 1923, Hardy and Littlewood showed that the Generalized Riemann Hypothesis (GRH) implies the ternary Goldbach conjecture for all sufficiently large odd primes \cite{HL1923}. Vinogradov removed the dependence on GRH in 1937 (see \cite{Vin1937, Vin2004}, also \cite[Chapter 26]{Dav2000}). Finally, in 2013, Helfgott released a full proof of the conjecture (see \cite{Helfgott}), although revision and refereeing of his publication is still in progress.

The problem that we examine is a sort of mix of the two problems above. In particular, we look at which numbers can be represented as the sum of a prime and two $k$th powers, specifically for $k=2$ and $3$ (see, e.g., \cite[Conjecture J]{HL1923}, also the discussion in \cite[Introduction]{BW2026}). In 1957, Hooley \cite{Hooley1957} showed that all sufficiently large integers can be written as the sum of a prime and two squares, assuming GRH. Linnik removed the dependence on GRH in 1960 (see \cite[Chapter VII]{Linnik1963}). 

More precisely, one has the following conjecture.

\begin{conjecture}\label{conj:prime squares}
    If $n>14$, then $n=p+x^2+y^2$, where $p$ is an odd prime and $x, y$ are positive integers.
\end{conjecture}

In order to bridge the gap between 14 and the ``sufficiently large'' of Hooley and Linnik, we seek to computationally verify Conjecture \ref{conj:prime squares} as far as we can. To do this, we ``bootstrap'' from computational results on numbers that can be written as the sum of an odd prime and a positive square. We do this by a greedy algorithm, where we subtract the largest possible square from $n$ (or almost-largest squares) and show the difference can be written as the sum of a prime and a square. Since not all positive integers can be written as the sum of a prime and a square, we need to do a little more work. We draw inspiration from the following conjecture.

\begin{conjecture}\label{conj:prime square}
    If $n>21679$ is an integer that is not a square, then $n=p+x^2$, where $p$ is an odd prime and $x$ is a positive integer.
\end{conjecture}

We note that $21679$ is conjectured to be the largest non-square integer that cannot be written as the sum of an odd prime and a positive square (see \eqref{eq:defn of exceptional set E2} below).

Hardy and Littlewood conjectured that there are only finitely many non-square integers that cannot be written as a sum of a prime and a square (see \cite[Conjecture H]{HL1923}). We can use the results from verifying Conjecture \ref{conj:prime square} up to $10^{14}$ to verify Conjecture \ref{conj:prime squares} up to $10^{26}$. We state this progress on Conjecture \ref{conj:prime squares} as a theorem, which we prove at the end of Section \ref{sec:prime two squares}.

\begin{theorem}\label{thm:prime squares}
   If $n$ is an integer satisfying $14<n<10^{26}$, then $n=p+x^2+y^2$, where $p$ is an odd prime and $x, y$ are positive integers.
\end{theorem}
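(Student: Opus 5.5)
The plan is to bootstrap from a computational verification of Conjecture \ref{conj:prime square} up to $10^{14}$. Let $\ex_2$ denote the set of non-square integers $m$ that are not of the form $p+x^2$ with $p$ an odd prime and $x\ge 1$. I assume, as the paper indicates, that a direct search shows $\ex_2\cap[1,10^{14}]$ is a finite, explicitly known list with largest element $21679$.

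For $14<n\le N_0$, with $N_0$ modest (say $10^{9}$), I would verify the theorem directly by computer: sieve the primes and, for each $n$, search over $y$ for some $n-y^2$ of the form $p+x^2$.

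For $N_0<n<10^{26}$ I would use a greedy step. Let $y_0=\lfloor\sqrt{n}\rfloor$ and consider $m_j=n-(y_0-j)^2$ for $j=0,1,2,\dots$. Since
\[
m_j\le n-(y_0-j)^2<(y_0+1)^2-(y_0-j)^2=(2j+1)(2y_0+1)+\dots,
\]
we have $m_j\ll (j+1)\sqrt{n}<10^{14}$ for small $j$ once $\sqrt{n}<10^{13}$. Thus the verified range of Conjecture \ref{conj:prime square} covers all $m_j$ with $j$ bounded. It suffices to find some $j$ for which $m_j>21679$ and $m_j$ is not a perfect square. Then $m_j=p+x^2$, and $n=p+x^2+(y_0-j)^2$ with $y_0-j\ge1$. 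If instead $m_j\in\ex_2$ with $m_j\le 21679$, or $m_j$ is too small, I shift $j$ slightly, or use $m_j$ values just above $21679$.

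The main obstacle is showing that among a few consecutive $j$ (say $j=1,2,3$), not all $m_j$ are perfect squares. A failure requires $n-(y_0-j)^2=z_j^2$ for several $j$ simultaneously. Two such equations give $(y_0-1)^2+z_1^2=(y_0-2)^2+z_2^2$, and three give a system which, after eliminating $n$, defines a curve of genus one. Concretely, the requirement that $z^2+2y-3$ and $z^2+4y-12$ (suitably parametrized) are both squares leads, via the standard intersection of two quadrics, to an elliptic curve. I would compute its integral points, for instance in Magma or Sage, to conclude that only small $n$ (below $N_0$) can have all three $m_j$ square. Those cases are then covered by the direct computation.

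Similarly, I must exclude $m_j$ landing on a square in combination with the small range $m_j\le 21679$. That case is harmless, since $m_j\ge 2y_0-1$ is large once $n>N_0$. Combining the two ranges gives the theorem.
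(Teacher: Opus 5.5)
Your proof breaks at the step you flagged as the main obstacle. For $n>N_0$ it is \emph{not} true that $m_1,m_2,m_3$ cannot all be perfect squares, so no integral-point computation can deliver that conclusion. The dimension count is off. With $t=y_0$ and $e=m_0=n-t^2$ one has $m_j=2jt-j^2+e$, and both $t$ and $e$ are free, subject only to $0\le e\le 2t$. So the conditions $m_j=z_j^2$, $j=1,2,3$, are three equations in five unknowns, and eliminating $n$ leaves a surface, not a genus-one curve. Eliminating $t$ and $e$ gives the quadric $z_1^2+z_3^2=2z_2^2-2$, which has infinitely many admissible integer points. Explicitly, if $b^2-17w^2=1$, then $n=16w^4+17w^2+1$ has $t=4w^2+2$, $m_0=w^2-3\in[0,2t]$, and
\[
m_1=(3w)^2,\qquad m_2=17w^2+1=b^2,\qquad m_3=(5w)^2.
\]
The Pell solutions $w=8,528,34840$ give $n=66625$, $n=1243533037825$ and $n\approx 2.36\cdot 10^{19}$. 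The last two lie in $(N_0,10^{26})$, and for them your argument produces no non-square $m_j$ with $j\in\{1,2,3\}$.

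A genuine elliptic curve appears only after $e$ is \emph{fixed}, and that is how the paper uses it: it also brings in $m_0$. If $m_0$ is a positive non-square outside $\mathcal{E}_2$, then $m_0=p+x^2$ and $n=p+x^2+t^2$. If $m_0$ is a square (including $0$), then $m_0,\dots,m_3$ cannot all be squares by a mod $8$ argument. Among four consecutive bases one is $\equiv 0$, one is $\equiv 2\pmod 4$ and one is odd, which forces $n\bmod 8$ into $\{0,1,4\}\cap\{1,2,5\}\cap\{0,4,5\}=\emptyset$. Only when $m_0=e$ is one of the finitely many elements of $\mathcal{E}_2$ does one need integral points on an elliptic curve $E_e$ in the variable $t$, and these yield only $n=66625$.

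Your own framework can be repaired even more cheaply by taking $j=1,2,3,4$. The same mod $8$ obstruction, applied with $t-1$ in place of $t$, shows $m_1,\dots,m_4$ are not all squares. Each $m_j\ge 2t-1>21679$, and $m_4\le 10t-16<10^{14}$ for $n<10^{26}$, so no elliptic curves are needed at all.
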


As mentioned above, this process relies on a ``bootstrapping'' procedure, described in the following theorem, which we prove in Section \ref{sec:prime two squares}. 

\begin{theorem}\label{thm:prime two squares}
    Let $N_2$ be a positive integer. Let $\mathcal{E}_2$ be the finite ``exceptional set'' defined in \eqref{eq:defn of exceptional set E2} below. Assume that every positive integer $<N_2$ is a square, is an element of $\ex_2$, or can be represented as the sum of an odd prime and a positive square. 
    
    Let $n$ be a positive integer. If $n\notin \{1, 2, 3, 4, 6, 14\}$ and 
    \begin{equation*}
    n < \left(\frac{N_2}{8}\right)^2,
    \end{equation*}
    then $n=p+x^2+y^2$, where $p$ is an odd prime and $x$ and $y$ are positive integers.
\end{theorem}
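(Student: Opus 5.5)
We argue by a greedy choice of one square. Write $m=\lfloor\sqrt{n}\rfloor$. For $n<(N_2/8)^2$ we have $m<N_2/8$. We would like to pick $y$ close to $m$ so that $r=n-y^2$ is a positive integer with $r<N_2$. Then, by hypothesis, $r$ is a square, lies in $\ex_2$, or equals $p+x^2$ with $p$ an odd prime and $x\ge1$. In the last case we are done. So the task is to choose $y$ so that $r$ avoids the squares and avoids $\ex_2$.

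\textbf{Step 1: size of the residuals.} For $y=m-j$ with $0\le j\le J$ we get
\[
r_j=n-(m-j)^2=(n-m^2)+2jm-j^2\le 2m+1+2jm .
\]
If $J$ is a small constant, say $J\le 3$, then $r_j\le 8m<N_2$. This is exactly where the factor $8$ in $(N_2/8)^2$ comes from. So the hypothesis applies to each of $r_0,\dots,r_3$ once $m\ge4$, which keeps $y\ge1$.

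\textbf{Step 2: ruling out squares.} Suppose $r_j=z^2$. Then $n=z^2+(m-j)^2$, so $n$ is a sum of two squares. Several consecutive candidates cannot all be squares, since
\[
r_{j+1}-r_j=2(m-j)-1,
\]
and two squares $z^2<w^2$ with $w^2-z^2=2(m-j)-1$ are rigidly constrained. More crudely, $r_0<2m+2$ and $r_1\approx r_0+2m$ both being squares forces very special relations among $n$ and $m$.

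\textbf{Step 3: ruling out $\ex_2$.} Since $\ex_2$ is finite and explicit, with maximum $21679$, the possibility $r_j\in\ex_2$ matters only when $r_j\le 21679$. For large $m$ with $j\ge1$ we have $r_j\ge 2m-1>21679$, so only the index $j=0$ can land in $\ex_2$ or be a square. Hence for $m$ larger than roughly $10^4$ we would show that among $r_1,r_2,r_3$ at most one is a perfect square. If $r_1=a^2$ and $r_2=b^2$, then
\[
b^2-a^2=2m-3 \quad\text{with}\quad a^2\approx 2m,
\]
which forces $b-a$ to be small, and a short computation shows $(a,b)$ cannot also make $r_3$ a square. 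So some $r_j$ works.

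\textbf{Step 4: small $n$ and the main obstacle.} The remaining small $n$, say $n$ up to a few times $10^8$, are checked directly by computer. That check produces the exceptional list $\{1,2,3,4,6,14\}$. The main obstacle is Step 3's claim that two or three of the $r_j$ cannot simultaneously be squares. This is a Diophantine condition: eliminating variables leads to integer points on a quartic or elliptic curve. This matches the paper's remark that the method needs integral points on elliptic curves. I would first try the elementary factorization $(b-a)(b+a)=2m-3$ argument, and fall back to computing integral points if it fails.
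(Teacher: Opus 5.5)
\textbf{The gap is in Step 3.} Step 3 is false, and everything else in your plan depends on it. The three residuals $r_1,r_2,r_3$ can all be perfect squares, for arbitrarily large $m$. Setting $r_1=a^2$, $r_2=b^2$, $r_3=c^2$, the conditions are $b^2-a^2=2m-3$ and $c^2-b^2=2m-5$. Equivalently, $a^2+c^2=2b^2-2$ with $2m=b^2-a^2+3$. This is a quadric \emph{surface}, not a curve, and it has infinitely many integral points satisfying the side condition $0\le r_0\le 2m$. So neither your factorization argument nor an integral-points computation can exclude this case.

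Your heuristic that $b-a$ is small is also wrong. Since $r_1$ ranges over $[2m-1,4m-1]$, the ratio $b^2/a^2$ lies between about $3/2$ and $2$, so $b-a$ is comparable to $a$. Concretely, $n=194286625$ has $m=13938$ with $r_1=216^2$, $r_2=273^2$, $r_3=320^2$. Likewise $n=66625$ has $r_1=24^2$, $r_2=33^2$, $r_3=40^2$. For such $n$ none of $r_1,r_2,r_3$ is usable, and your argument has nothing left.

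\textbf{The missing idea} is to treat $r_0=n-m^2$ as a genuine fourth candidate and close the case ``$r_1,r_2,r_3$ all squares'' through it. There are three possibilities for $r_0$.

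If $r_0$ is a positive non-square outside $\ex_2$, you are done by hypothesis. This is what rescues $n=194286625$, where $r_0=18781$.

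If $r_0$ is a square, a congruence modulo $8$ shows $r_0,\dots,r_3$ cannot all be squares. This includes $r_0=0$, i.e.\ $n$ a perfect square, which your plan never addresses. Among $m,m-1,m-2,m-3$, one is $\equiv 0$, one is odd, and one is $\equiv 2 \pmod 4$. That forces $n \bmod 8$ into $\{0,1,4\}\cap\{1,2,5\}\cap\{4,5,0\}=\emptyset$.

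If $r_0=e\in\ex_2$, then $(2m+e-1)(4m+e-4)(6m+e-9)=(abc)^2$. Multiplying by $36$ gives an integral point $(12m,6abc)$ on an elliptic curve $E_e$. Fixing $e$ is exactly what cuts the surface above down to a curve. Computing integral points on $E_e$ for each $e\in\ex_2$ leaves only $n=66625$ among $n\ge 100$, and that value is checked by hand: $66625=23+119^2+229^2$.

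With these two lemmas in place, the rest of your outline matches the paper: the bound $r_3<8m<N_2$, the bound $r_j\ge 2m-1>21679$ for $j\ge 1$, and a direct computer check for small $n$.
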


One can ask similar questions and make similar conjectures involving cubes instead of squares. For instance, all large integers should be the sum of an odd prime and two positive cubes.

\begin{conjecture}\label{conj:prime cubes}
    If $n>308$ is a positive integer, then $n=p+x^3+y^3$, where $p$ is an odd prime and $x,y$ are positive integers.
\end{conjecture}

It is far out of reach of present technology to prove that all sufficiently large integers are the sum of a prime and two cubes. However, one can use the circle method to show that all sufficiently large integers may be written as the sum of a prime and four cubes (see \cite[p. 2850]{BW2026}).

Conjecture \ref{conj:prime square} has an analogue for cubes.

\begin{conjecture}\label{conj:prime cube}
    If $n>78526384$ is an integer that is not a cube, then $n=p+x^3$, where $p$ is an odd prime and $x$ is a positive integer.
\end{conjecture}

Similarly to Conjecture \ref{conj:prime square} above, we believe $78526384$ is the largest non-cube integer that cannot be written as the sum of an odd prime and a positive cube. (See Section \ref{sec:prime two cubes} below.)

By verifying Conjecture \ref{conj:prime cube} up to $10^{12}$, we can prove Conjecture \ref{conj:prime cubes} up to $10^{16}$. Due to some Diophantine difficulties, we do not quite have a nice theorem for cubes like Theorem \ref{thm:prime two squares} for squares (see Section \ref{sec:cubes} for further discussion). However, with additional computational work, we can use Theorem \ref{thm:prime two cubes} below for this deduction.

Much of our work in this paper is computational and relies on computer calculation. All the code for this paper is available at our “Sums of Primes with Squares or Cubes” GitHub repository\cite{code}.

The outline of the rest of the paper is as follows. In Section \ref{sec:prime two squares}, we discuss sums of a prime and two squares. In Section \ref{sec:prime two cubes}, we discuss sums of a prime and two cubes. We summarize our computational methods in Section \ref{sec:computational methods}. Then, in Section \ref{sec:cubes}, we summarize our attempts to address the shortcomings in Theorem \ref{thm:prime two cubes}. We discuss some conjectures that, if true, would allow us to strengthen the conclusion of Theorem \ref{thm:prime two cubes}.

\section{Sums of a Prime and Two Squares}\label{sec:prime two squares}
In this section we discuss our results involving sums of an odd prime and two positive squares. However, we begin with sums of an odd prime and one positive square. 

We define the exceptional set $\ex_2$ by
\begin{align}\label{eq:defn of exceptional set E2}
    \ex_2 = \{&2, 3, 5, 10, 13, 31, 34, 37, 58, 61, 85, 91, 127, 130, 214, 226, 370, 379, \nonumber \\ 
    &439, 526, 571, 706, 730, 771, 829, 991, 1255, 1351, 1414, 1549, 1906, \\
    &2986, 3319, 3676, 7549, 9634, 21679\}.\nonumber
\end{align}
We conjecture that $\mathcal{E}_2$ is the complete list of positive non-square integers that cannot be written as the sum of an odd prime and a positive square. The set $\mathcal{E}_2$ appears in Theorem \ref{thm:prime two squares} above, and also in the following theorem.

\begin{theorem}\label{thm:prime square}
Let $n$ be a positive integer with $n<10^{14}$. Assume that $n\notin\ex_2$ and that $n$ is not a square. Then $n=p+x^2$, where $p$ is an odd prime and $x$ is a positive integer.
\end{theorem}

We proved Theorem \ref{thm:prime square} directly with efficient computation. (See our code\footnote{\url{https://github.com/kapplegate2020/Sums-of-a-Prime-with-Squares-or-Cubes/blob/main/prime_power_code/square.cpp}} and also Section \ref{subsec:direct computations} for more details.)

Now we move on to Theorem \ref{thm:prime two squares}, which, combined with the previous theorem will allow us to arrive at our main computational result in Theorem \ref{thm:prime squares}. We first prove two lemmas.

\begin{lemma}\label{lem:four squares}
    If $n$ is an integer, then $n$ cannot be written as
    \[
    n = t^2+a^2 = (t-1)^2+b^2 = (t-2)^2+c^2 = (t-3)^2+d^2,
    \]
    where $t, a, b, c, d$ are all integers.
\end{lemma}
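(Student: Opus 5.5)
The plan is to eliminate $n$ and $t$ and reduce the statement to a congruence obstruction modulo $8$. I would not need any earlier result of the paper; this is elementary.

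First, subtract consecutive representations. From $t^2+a^2=(t-1)^2+b^2$ we get $b^2-a^2=2t-1$. In the same way, $c^2-b^2=2t-3$ and $d^2-c^2=2t-5$. Taking differences of these eliminates $t$:
\[
a^2+c^2 = 2b^2-2, \qquad b^2+d^2 = 2c^2-2 .
\]
Each of $b^2-a^2$, $c^2-b^2$ and $d^2-c^2$ is odd, so the parities of $a,b,c,d$ alternate. Hence either $a,c$ are odd and $b,d$ are even, or $a,c$ are even and $b,d$ are odd.

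Next, I would show that no integers $x,y,z$ with $x,z$ odd and $y$ even satisfy $x^2+z^2=2y^2-2$. Since odd squares are $\equiv 1 \pmod 8$, the left side is $\equiv 2 \pmod 8$. Since $y$ is even, $y^2\equiv 0$ or $4 \pmod 8$, so $2y^2\equiv 0\pmod 8$ and the right side is $\equiv 6 \pmod 8$. This is a contradiction.

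Finally, apply this to whichever triple has the right parity pattern. If $a$ is odd, use $(x,y,z)=(a,b,c)$ with the first identity. If $a$ is even, then $b,d$ are odd and $c$ is even, so use $(x,y,z)=(b,c,d)$ with the second identity. Either way we reach a contradiction, which proves the lemma. In fact, any three consecutive such representations are already impossible, and the four in the statement ensure that a triple with the right parities always exists. The only delicate point is this parity bookkeeping, that is, choosing the correct triple, and it is handled by the alternation observation above.
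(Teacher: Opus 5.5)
Your proof is correct, and it reaches a mod $8$ obstruction by a genuinely different route from the paper's.

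The paper eliminates nothing. It reduces $n$ itself modulo $8$, using that among the bases $t,t-1,t-2,t-3$ there is a multiple of $4$, an odd number, and a number $\equiv 2 \pmod 4$. So $n$ would have to lie in $\{0,1,4\}\cap\{1,2,5\}\cap\{4,5,0\}=\emptyset$.

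You instead use that the $(t-i)^2$ have constant second difference $2$ to eliminate both $n$ and $t$. This leaves the relations $a^2+c^2=2b^2-2$ and $b^2+d^2=2c^2-2$ among the partners alone. You then split on the alternating parities of the partners rather than on the residues of the bases.

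The paper's argument is shorter and works with $n$ directly. Yours pinpoints exactly which three-term windows are obstructed: those whose middle partner is even. It also shows the role of the fourth representation from the other side. In your argument it guarantees an odd--even--odd window of partners; in the paper it guarantees that both even classes mod $4$ occur among the bases.

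One correction to your closing remark: three consecutive representations are not impossible in general. For example, $25=5^2+0^2=4^2+3^2=3^2+4^2$. The paper's own example from Lemma~\ref{lem:exception three squares} is another: $66625=257^2+24^2=256^2+33^2=255^2+40^2$. In both, the middle partner is odd, which is precisely the case your congruence does not exclude. What is true, and all your proof uses, is that three consecutive representations with even middle partner are impossible.
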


\begin{proof}
    Let $n$ be an integer, and assume by way of contradiction that 
    \[
    n = t^2+a^2 = (t-1)^2 + b^2 = (t-2)^2+c^2 = (t-3)^2+d^2
    \]
    for some integers $t, a, b, c, d$.

    We work modulo 8. A straightforward computation reveals that 0, 1, and 4 are the only quadratic residues modulo 8. Of the values $t, t-1, t-2,$ and $t-3$, at least one must be divisible by 4, at least one must be odd, and at least one must be $\equiv 2 \pmod{4}$. Assume that $t$ is divisible by 4, so that $t-1$ is odd and $t-2\equiv 2 \pmod{4}$ (the arguments in the other cases are similar). We can then compute
    \begin{align*}
        n \equiv t^2+a^2 \equiv 0+a^2 &\equiv 0, 1, \text{or }4 \pmod 8\\
        n \equiv (t-1)^2+b^2 \equiv 1+b^2 &\equiv 1, 2, \text{or }5 \pmod 8\\
        n \equiv (t-2)^2+c^2 \equiv 4+c^2 &\equiv 4, 5, \text{or }0 \pmod 8.
    \end{align*}
    This is a contradiction, since no residue appears in all three of the above equations as possibilities for $n\pmod 8$. 
\end{proof}

\begin{lemma}\label{lem:exception three squares}
    Let $n\geq 100$ be a positive integer, and set $t = \lfloor \sqrt{n}\rfloor$. Assume
    \begin{equation}\label{eq:exception three squares}
        n = t^2+e = (t-1)^2 + a^2 = (t-2)^2+b^2 = (t-3)^2+c^2
    \end{equation}
    for some $e\in \ex_2$ and some integers $a,b,c$. Then $n=66625$.
\end{lemma}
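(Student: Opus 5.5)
Write $n=t^2+e$ with $t=\lfloor\sqrt n\rfloor\ge 10$ and $e\in\ex_2$. The plan is to eliminate $t$ from \eqref{eq:exception three squares}, which leaves, for each fixed $e$, a genus-one curve. I would then determine the finitely many integral points on each such curve by computer, as promised in the abstract.

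\textbf{Reduction to curves.} Subtracting the equations in \eqref{eq:exception three squares} gives
\[
a^2=e+2t-1,\qquad b^2=e+4t-4,\qquad c^2=e+6t-9 .
\]
Since $a^2=e+2t-1$, we have $t=(a^2-e+1)/2$. Substituting this into the other two relations gives the pair of conics
\[
b^2=2a^2-(e+2),\qquad c^2=3a^2-(2e+6).
\]
This is an intersection of two quadrics in $\mathbb{P}^3$, so it is a genus-one curve $C_e$. Multiplying the two relations gives the quartic model
\[
y^2=(2a^2-e-2)(3a^2-2e-6),\qquad y=bc .
\]
We need only consider solutions with $a,b,c\ge 0$.

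\textbf{Cheap sieving first.} Before any elliptic-curve work I would cut down the list of $e$ by congruences, in the spirit of Lemma \ref{lem:four squares}.
\begin{itemize}
\item Modulo $8$, the three values $t-1,t-2,t-3$ occupy three consecutive residues. Each of $a^2,b^2,c^2$ lies in $\{0,1,4\}$ mod $8$. Together with $n\equiv t^2+e$, this forces conditions on $e \bmod 8$ and $t \bmod 4$.
\item Modulo $3$, $5$, $7$ and so on, one can check local solvability of $b^2=2a^2-(e+2)$ and $c^2=3a^2-(2e+6)$ simultaneously with the constraint that $a^2\equiv e-1 \pmod 2$.
\end{itemize}
Any $e$ for which some modulus admits no simultaneous solution is discarded. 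I expect this to eliminate a sizeable portion of the 37 elements of $\ex_2$, but not all of them.

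\textbf{Main step: integral points on the surviving curves.} For each remaining $e$, the curve $C_e$ has the obvious rational points at infinity of the quartic only when the leading coefficient $6$ is a square, which it is not. So I would instead search for a small rational point; one can also take the point coming from the known solution when $e=61$. Using such a point, I would transform $C_e$ to a Weierstrass elliptic curve $E_e$. Then I would run an integral-points routine (Sage's or Magma's \texttt{IntegralQuarticPoints}, or the Weierstrass integral-point algorithm based on linear forms in elliptic logarithms).

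If $C_e$ has no rational points at all, the curve is eliminated outright; this can be certified by local insolubility or by a 2-descent.

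For each integral $(a,b,c)$ found, I would recover $t=(a^2-e+1)/2$ and keep it only if the following conditions hold:
\begin{itemize}
\item $t$ is an integer with $t\ge 10$;
\item $e\le 2t$, so that $\lfloor\sqrt{t^2+e}\rfloor=t$.
\end{itemize}

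The expected output is the single survivor $e=61$, $a=24$, giving $t=258$. Indeed $b^2=1091-61=1030+...$ should be checked directly: $e+4t-4=61+1028=1089=33^2$ and $e+6t-9=61+1539=1600=40^2$. This gives $n=258^2+61=66625$.

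\textbf{Obstacle.} I expect the main obstacle to be this elliptic-curve step. Finding a rational base point for each quartic and computing Mordell--Weil bases is needed for the integral-point algorithm to be rigorous, and the rank or a generator might be hard to certify for some $e$. If that happens, I would first try working instead with the individual conic-pair parametrization. Concretely, I would parametrize $b^2=2a^2-(e+2)$ via its finitely many Pell-type orbits and test $3a^2-(2e+6)=\square$ along each orbit, which reduces to smaller quartics. Only if that also stalls would I fall back on heavier descent.
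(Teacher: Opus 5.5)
Your proposal is correct, but it eliminates the opposite variable from the paper, and that choice is what creates the obstacle you flag.

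\textbf{The paper's route.} The paper keeps $t$ and eliminates $a,b,c$. Multiplying $a^2=2t+e-1$, $b^2=4t+e-4$, $c^2=6t+e-9$ and scaling by $36$ gives the Weierstrass curve
\[
E_e:\ y^2=(x+6e-6)(x+3e-12)(x+2e-18),\qquad x=12t,\quad y=6abc.
\]
Every solution is then already an integral point on a curve in Weierstrass form. The paper runs Sage's \texttt{integral\_points()} for all $37$ values of $e$ and filters the output: it requires $12\mid x$, $6\mid y$, $y>0$, and each factor to be a square individually, not just their product.

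\textbf{Your route and its cost.} You keep $a$ and obtain the quartic $y^2=(2a^2-e-2)(3a^2-2e-6)$, which has no rational point at infinity. This forces three extra steps: a base-point search, a certification that $C_e(\mathbb{Q})$ is empty when no point turns up, and a quartic-specific integral-point routine, since integrality is not preserved when you pass to Weierstrass form. The detour is avoidable. The map
\[
(a,y)\mapsto\bigl(6(a^2-e+1),\,6ay\bigr)
\]
sends your quartic to the paper's $E_e$ and preserves integrality, so integral points on the cubic already capture everything.

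\textbf{Your emptiness certificate.} Your $C_e$ is the $2$-covering of $E_e$ attached to the class $(6,3,2)$, since $x+6e-6=6a^2$, $x+3e-12=3b^2$ and $x+2e-18=2c^2$. So when $C_e$ is everywhere locally soluble, a $2$-descent alone cannot certify $C_e(\mathbb{Q})=\emptyset$, because the Tate--Shafarevich group may interfere. You need either the Mordell--Weil group of $E_e$, which is the same input the paper uses, or a higher descent.

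\textbf{What your route buys.} Your model is tighter, with fewer spurious points. Your congruence sieve is also a real saving that the paper does not use. Modulo $8$ it forces $t$ even and $e\equiv 5\pmod 8$, leaving only $e\in\{5,13,37,61,85,829,1549,7549\}$. Modulo $3$ and $5$ it further discards $e=5$ and $e=13$.
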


\begin{proof}
    Let $n$ satisfy \eqref{eq:exception three squares}. By subtracting $t^2$, we have
    \[
    e = -2t+1+a^2 = -4t+4+b^2 = -6t+9+c^2.
    \]
    Rearranging and solving, we get
    \[
        a^2b^2c^2 = (2t+e-1)(4t+e-4)(6t+e-9),
    \]
    and expanding yields
    \[  
        (abc)^2 = 48t^3+(44e-144)t^2+(12e^2-96e+132)t+(e^3-14e^2+49e-36).
    \]
    We multiply both sides by 36 to obtain a monic expression on the right-hand side:
    \[
        (6abc)^2 = (12t)^3+(11e-36)(12t)^2+3(12e^2-96e+132)(12t)+36(e^3-14e^2+49e-36).
    \]
    Thus, the point $(12t, 6abc)$ is an integral point on the elliptic curve 
    \[
    E_e:y^2=x^3+(11e-36)x^2+3(12e^2-96e+132)x+36(e^3-14e^2+49e-36).    \]
    We go through each $e \in \mathcal{E}_2$ and compute the integral points on $E_e$ in SageMath\footnote{\url{https://github.com/kapplegate2020/Sums-of-a-Prime-with-Squares-or-Cubes/blob/main/prime_square_square_code/lemma2_3.sage}}. We note that not every integral point $(x,y)$ on $E_e$ corresponds to a solution to \eqref{eq:exception three squares}, since we must have $12 \mid x$ and $6 \mid y$. Additionally, from equation \eqref{eq:exception three squares}, we can deduce that $c^2>b^2>a^2>e>0$, so we may choose to only look for $a, b, c>0$, which implies that $y=6abc>0$. Further, there are cases when the product
    \[
    (n-(t-1)^2)(n-(t-2)^2)(n-(t-3)^2)
    \]
    is a square, although two or more of the individual terms may not be squares. After doing the computations, we find that the only integer $n$ that satisfies equation \eqref{eq:exception three squares} is $n=66625$, arising from $e = 61$:
    \begin{align*}
        66625 &= 258^2+61 = 257^2 + 24^2 = 256^2+33^2=255^2+40^2. \qedhere
    \end{align*}
\end{proof}

Now that we have proved the lemmas, we can prove Theorem \ref{thm:prime two squares}.

\begin{proof}[Proof of Theorem \ref{thm:prime two squares}]
    Let $n$ be a positive integer with $n\notin\{1, 2, 3, 4, 6, 14\}$. For all such $n\leq1.2\cdot10^8$, we verify directly that $n$ can be written as the sum of an odd prime and two positive squares.\footnote{\url{https://github.com/kapplegate2020/Sums-of-a-Prime-with-Squares-or-Cubes/blob/main/prime_square_square_code/theorem1_4.py}} Therefore, assume that $1.2\cdot10^8<n<(\frac{N_2}{8})^2$, where $N_2$ is as in the statement of the theorem. Let $t=\left \lfloor\sqrt{n}\right \rfloor$, and note that $t>10900$ since $n > 1.2 \cdot 10^8$. 
    
    Consider the values
    \begin{align*}
        \alpha_0 = n-t^2, \ \ \ \ \alpha_1 = n-(t-1)^2,\ \ \ \ \alpha_2 = n-(t-2)^2, \ \ \ \ \alpha_3 = n-(t-3)^2.
    \end{align*}
    It is clear that $0\leq \alpha_0<\alpha_1 < \alpha_2 < \alpha_3$, and
    \begin{align*}
        \alpha_3 \leq (t+1)^2-(t-3)^2 < 8t \leq 8\sqrt{n} < N_2.
    \end{align*}
    Further, if $1 \leq i \leq 3$, we have
    \begin{align*}
        \alpha_i \geq \alpha_1 = n-t^2+2t-1 \geq 2t-1 > 2\cdot 10900-1 > 21679,
    \end{align*}
    so we cannot have any of $\alpha_1,\alpha_2,\alpha_3$ be elements of $\mathcal{E}_2$. 
    
    If $\alpha_0 = 0 = 0^2$, then by Lemma \ref{lem:four squares} we see that one of $\alpha_1,\alpha_2,\alpha_3$ is not a square. Since these integers are positive and not in $\mathcal{E}_2$, then by assumption one of the $\alpha_i$ with $1 \leq i \leq 3$ is the sum of an odd prime and a positive square. We then have the desired result for $n$.

    Now assume $\alpha_0$ is positive. By assumption, we have that every $\alpha_i$ is a square, is an element of $\mathcal{E}_2$, or is the sum of an odd prime and a square. If $\alpha_0 \in \mathcal{E}_2$ and the rest are squares, then by Lemma \ref{lem:exception three squares} we have $n=66625$. Since $66625 = 23+119^2+229^2$, say, we see that $66625$ is the sum of an odd prime and two positive squares. Otherwise, one of the $\alpha_i$ is the sum of an odd prime and a positive square, and it follows that $n$ is the sum of an odd prime and two positive squares.
\end{proof}

Theorem \ref{thm:prime two squares} is significant, because if Conjecture \ref{conj:prime square} is verified to a greater upper bound, then the theorem automatically yields a greater upper bound for Conjecture \ref{conj:prime squares} with no additional work. This is exemplified in the proof of Theorem \ref{thm:prime squares}.

\begin{proof}[Proof of Theorem \ref{thm:prime squares}]
By Theorem \ref{thm:prime square}, we can apply Theorem \ref{thm:prime two squares} with $N_2 = 10^{14}$. Since
\begin{align*}
    \left(\frac{10^{14}}{8} \right)^2 = 1.5625 \cdot 10^{26} > 10^{26},
\end{align*}
we obtain the result.
\end{proof}

\section{Sums of a Prime and Two Cubes}\label{sec:prime two cubes}
We now shift from working with squares to working with cubes. In a similar manner to the previous section, we first look at sums of an odd prime and a positive cube before examining sums of an odd prime and two positive cubes. We define $\ex_3$ to be all the positive non-cube integers that we found that cannot be written as the sum of an odd prime and a positive cube. (Thus, $\mathcal{E}_3$ is the ``exceptional set'' for cubes in the same way that $\mathcal{E}_2$ is the exceptional set for squares.) We conjecture that $\mathcal{E}_3$ is the complete list of positive, non-cube integers that cannot be written as the sum of an odd prime and a positive cube. We do not reproduce $\mathcal{E}_3$ here since it has 7058 elements (the largest of which is 78526384), but see Table \ref{tab:cube exceptions} for some rough details.\footnote{See the full list here:\par  \url{https://github.com/kapplegate2020/Sums-of-a-Prime-with-Squares-or-Cubes/blob/main/exceptions/prime_cube.txt}}

\begin{table}[h]
    \centering
    \begin{tabular}{|c|c|}
        \hline
        \textbf{Range} & \textbf{Number of elements of $\mathcal{E}_3$} \\
        \hline
        1-10,000 & 970\\
        10,001-100,000 & 2011\\
        100,001-1,000,000 & 2808\\
        1,000,001-10,000,000 & 1181\\
        10,000,001-100,000,000 & 88\\
        \hline
    \end{tabular}
    \caption{Number of prime-cube exceptions in each range.}
    \label{tab:cube exceptions}
\end{table}

Now we can state the following theorem. It was proven directly with efficient computation. See our code\footnote{\url{https://github.com/kapplegate2020/Sums-of-a-Prime-with-Squares-or-Cubes/blob/main/prime_power_code/cube.cpp}} or Section \ref{subsec:direct computations} for more details.
\begin{theorem}\label{thm:prime cube}
Let $n$ be a positive integer with $n<10^{12}$. Assume that $n\notin\ex_3$ and $n$ is not a cube. Then $n=p+x^3$, where $p$ is an odd prime and $x$ is a positive integer.
\end{theorem}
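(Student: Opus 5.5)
This statement is a finite computational claim, so the plan is to verify it by an exhaustive but efficient computation, organized so that each $n<10^{12}$ is certified quickly. Set $N=10^{12}$. First, generate all primes below $N$ with a segmented sieve of Eratosthenes, processing blocks of length roughly $10^{8}$ to $10^{9}$ that fit in cache or memory. The sieving primes are only those up to $10^6$.

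Next, for each block $[L,L+B)$, initialize a bit array marking every integer in the block as ``unrepresented.'' For a representation $n=p+x^3$ with $x\ge 1$, we have $x\le \lfloor (n-3)^{1/3}\rfloor<10^4$. The key observation is that almost every $n$ is represented with a very small $x$. So rather than looping over all $x$ for each $n$, I would proceed in two phases:
\begin{itemize}
\item \emph{Greedy pass.} For each $n$ in the block, test $x=1,2,3,\dots$ in increasing order. For each $x$, look up whether $n-x^3$ is an odd prime, using the sieve data covering $[L-x^3, L+B)$; this means keeping a window of sieved primes extending about $10^{12}$ backwards only in compressed form, or re-sieving the shifted intervals. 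Stop at the first success. Heuristically, the expected number of trials is $O(\log n)$, since $n-x^3$ is prime with probability about $1/\log n$, adjusted by congruence conditions. Parity matters here: $n-x^3$ must be odd, so only $x$ of opposite parity to $n$ count.
\item \emph{Exhaustive check.} For the few $n$ surviving after, say, the first $K$ values of $x$, fall back to testing every $x$ up to $(n-3)^{1/3}$, using a deterministic Miller--Rabin test for integers below $2^{64}$.
\end{itemize}

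An equivalent, more cache-friendly implementation is to sieve the block $[L-x^3, L+B-x^3)$ for each small $x$ and OR the shifted prime indicator into the ``represented'' array. This continues until the number of unrepresented entries is tiny, after which the remaining entries are handled individually.

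Finally, collect every $n<10^{12}$ that is neither represented nor a perfect cube, and check that this list coincides exactly with $\ex_3$. Agreement proves the theorem. As a consistency check, it also confirms that no exception exceeds $78526384$ in this range.

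The main obstacle is computational cost. A naive approach sieves to $10^{12}$ and tests up to $10^4$ cubes per $n$, which is infeasible. The plan above makes the work essentially proportional to the sieve itself, about $10^{12}\log\log$ operations, plus a small overhead. The difficulty is engineering the segmented sieve so that primality of $n-x^3$ is available for the shifts actually needed. I would first try re-sieving the shifted segment for each small $x$ up to a modest cutoff such as $K=60$, and then use Miller--Rabin for stragglers, parallelizing across blocks.
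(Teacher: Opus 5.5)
Your plan is a correct verification procedure. Its overall shape matches the paper's: mark represented $n$ by OR-ing shifted prime indicators, treat leftovers individually with deterministic Miller--Rabin, and check that the unrepresented non-cubes are exactly $\ex_3$. The difference is that you run the search in the opposite direction.

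The paper keeps one fixed bitmap of the primes below $10^{10}$ and handles $128$ consecutive integers at a time, with the $64$ of each parity in one $64$-bit word. It ORs in shifted words of that table for the cubes, starting from the largest cube below the chunk and moving down, so the primes it uses are small. Any $n$ not certified by a prime below $10^{10}$ is then tested individually.

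You instead try the smallest cubes first. Your primes therefore lie within $K^3$ of $n$ and come from a segmented sieve of the window $[L-K^3,L+B)$.

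Each orientation has a cost and a benefit:
\begin{itemize}
\item The paper's never sieves beyond $10^{10}$ and reuses a single table. However, consecutive cubes near $10^{12}$ are about $3\cdot 10^{8}$ apart, so only about $33$ cubes (half of them of the right parity) leave a remainder below $10^{10}$. A sizeable fraction of the $n$ near $10^{12}$ therefore falls through to individual testing. The paper names exactly this as the reason it stopped at $10^{12}$.
\item Yours requires sieving the whole range. In exchange, the proportion of stragglers depends only on $K$ and $\log n$, so it should scale better for cubes.
\end{itemize}

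Two quantitative corrections. First, $K=60$ gives only $30$ parity-compatible trials, each succeeding with probability about $2/\ln(10^{12})\approx 0.07$. Roughly a tenth of the $n$ near $10^{12}$ would then survive the greedy pass, which is not ``few.'' Taking $K$ of a few hundred (window length $K^3\le 10^{8}$) makes the survivors negligible at little extra cost. Second, the backward window you need has length $K^3$, not $10^{12}$.
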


We can use Theorem \ref{thm:prime cube} and bootstrap the results to get a better result for the sum of a prime and two cubes. We state this as a general theorem.

\begin{theorem}\label{thm:prime two cubes}
    Let $N_3$ be a positive integer. Assume that all positive integers less than $N_3$ are either a cube, an element of the exceptional set $\ex_3$, or can be represented as the sum of an odd prime and a positive cube. 
    
    Let $n$ be a positive integer. If $n\notin \{1, 2, 3, 4, 6, 8, 10, 11, 17, 18, 24, 30, 34, 36,37, 44, 60, 74, 86, $ $ 90, 93, 100, 130, 210, 252, 308\}$ and 
    \begin{equation*}
    n < \left(\frac{N_3-2}{6}\right)^{3/2},
    \end{equation*}
    then either $n=p+x^3+y^3$, where $p$ is an odd prime and $x$ and $y$ are positive integers, or $n-\lfloor n^{1/3}\rfloor^3$ and $n-(\lfloor n^{1/3}\rfloor-1)^3$ are both cubes.
\end{theorem}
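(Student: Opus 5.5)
The plan mirrors the proof of Theorem \ref{thm:prime two squares}: a greedy subtraction of near-largest cubes. Let $t=\lfloor n^{1/3}\rfloor$ and consider
\[
\beta_0=n-t^3,\qquad \beta_1=n-(t-1)^3,\qquad \beta_2=n-(t-2)^3 .
\]

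\textbf{Step 1: small $n$.} First I will verify directly by computer that every $n$ up to some moderate bound $M$, other than the listed exceptions, is of the form $p+x^3+y^3$. The bound $M$ is chosen so that $2\cdot 78526384$ is comfortably below the quantities in Step 2; something like $M\approx 10^{13}$ is probably too large to search, so I will pick $M$ just large enough that $3(t-1)^2>78526384$. Here $t\gtrsim 5200$, so $M\approx 1.5\cdot10^{11}$ is enough. If that is too costly, I will instead use a smaller threshold and compare against the largest element of $\ex_3$ directly.

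\textbf{Step 2: size bounds.} For larger $n$, we have $0\le\beta_0<\beta_1<\beta_2$ and
\[
\beta_2\le (t+1)^3-1-(t-2)^3 = 9t^2-9t+8 .
\]
This is not quite the constant $6$ in the hypothesis, which suggests the authors use only $\beta_0,\beta_1$. In that case
\[
\beta_1\le (t+1)^3-1-(t-1)^3=6t^2+1<6n^{2/3}+2 .
\]
The hypothesis $n<((N_3-2)/6)^{3/2}$ gives exactly $6n^{2/3}+2<N_3$, so $\beta_0,\beta_1<N_3$. This confirms that the argument uses just two cubes. For the lower bound, $\beta_1\ge 3t^2-3t+1>78526384$ once $t$ is large, so $\beta_1\notin\ex_3$.

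\textbf{Step 3: case analysis.} By hypothesis, $\beta_1$ is either a cube or a sum of an odd prime and a positive cube.
\begin{itemize}
\item If $\beta_1=p+y^3$, then $n=p+(t-1)^3+y^3$ with $t-1\ge1$, and we are done.
\item Otherwise $\beta_1$ is a cube, and we look at $\beta_0$. If $\beta_0=0$, then $n=t^3$ and $n-(t-1)^3$ is a cube. This already falls under the second alternative, since $0$ is a cube.
\item If $\beta_0>0$ and $\beta_0=p+y^3$, we are done.
\item If $\beta_0$ is a cube, then both $\beta_0$ and $\beta_1$ are cubes, which is the second alternative of the conclusion.
\end{itemize}

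\textbf{Step 4: the remaining case.} What is left is $\beta_1$ a cube and $\beta_0=e\in\ex_3$. I expect this to be the main obstacle. The plan is an elliptic-curve argument analogous to Lemma \ref{lem:exception three squares}. The condition is
\[
n=t^3+e=(t-1)^3+a^3, \qquad\text{so}\qquad a^3=3t^2-3t+1+e .
\]
For each fixed $e$ this is the cubic curve $a^3=3t^2-3t+1+e$. Multiplying by $36$ and setting $s=6t-3$ gives
\[
12a^3=s^2+3+12e,
\]
that is, $s^2=12a^3-3-12e$. Scaling by $144$, with $X=12a$ and $Y=12s$, yields
\[
Y^2=X^3-144(3+12e),
\]
a Mordell curve. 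For each of the $7058$ values $e\in\ex_3$, I will compute its integral points in SageMath. Each integral point with $t$ large enough must give an $n$ that is either handled by the direct check of Step 1 or verified separately.

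The computational burden is determining the integral points on roughly $7000$ Mordell curves, some with large coefficients (up to about $10^{9}\cdot 144$). Computing ranks and generators may be hard for some of them. If a curve resists, I would try a different curve model, or fall back on the fact that only $t$ with $\beta_1<N_3$ matters and search directly. The exceptional list in the theorem should come out of the Step 1 computation.
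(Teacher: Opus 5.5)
Your argument is correct, and its skeleton matches the paper's. You compare only $n-t^3$ and $n-(t-1)^3$, using $6t^2+2\le 6n^{2/3}+2<N_3$. Your Step 4 is the paper's Lemma~\ref{lem: cube, elliptic curve}, and your model $Y^2=X^3-432(4e+1)$ with $X=12a$, $Y=12(6t-3)$ is the Mordell curve the paper uses in Section~\ref{sec:elliptic curves}. The paper's first choice was $y^2-9y=x^3-27(e+1)$.

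The real difference is how you dispose of the cases with $n-(t-1)^3\in\ex_3$. You make them vanish by a size threshold: $3t^2-3t+1>\max\ex_3$ once $t\ge 5117$. You pay for this with an exhaustive check of every $n\lesssim 1.34\cdot 10^{11}$. The paper instead shows in Lemma~\ref{lem:cube} that these cases yield an explicit short list of candidates:
\begin{itemize}
\item If both values lie in $\ex_3$, then $3t^2-3t+1$ must be a difference of two elements of $\ex_3$.
\item If $n-t^3$ is a cube and $n-(t-1)^3\in\ex_3$, then $t\le 5116$ and $n=t^3+a^3$ with $a^3<3t^2+3t+1$.
\end{itemize}
This leaves only $32734+766$ integers to test directly. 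Your route is conceptually simpler and presumably feasible on a cluster, but it costs about $10^{11}$ representability checks where the paper needs tens of thousands.

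One caveat on Step 4. In practice, Sage could not compute a Mordell--Weil basis for 865 of the 7058 curves, so \texttt{integral\_points()} could not be trusted there. Your contingency plans do not fix this. Passing to a different (isomorphic) model does not change the Mordell--Weil group. A direct search bounded via $N_3$ would prove the statement only for one fixed $N_3$, whereas the theorem is asserted for arbitrary $N_3$ and so needs \emph{all} integral points. The paper handles these curves with the Bennett--Ghadermarzi method: it enumerates the binary cubic forms of discriminant $-108K$ and solves the associated Thue equations. That method is unconditional and needs no generators.
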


To prove this result, we first state and prove two lemmas.

\begin{lemma}\label{lem:cube}
    Let $n$ be a positive integer and let $t=\lfloor n^{1/3}\rfloor$. There is an explicit finite list of $n$ that satisfy at least one of the following conditions:
    \begin{enumerate}
        \item $n-t^3\in \ex_3$ and $n-(t-1)^3\in \ex_3$.
        \item $n-t^3=a^3$ for some integer $a\geq 0$, and $n-(t-1)^3\in \ex_3$.
    \end{enumerate}
    Furthermore, for all such $n$ with
    \[n\notin\{2, 3,8, 10, 11, 17, 18, 24, 30, 34, 36, 37, 44, 60, 74, 86, 90, 93, 100, 130, 210, 252, 308\},\]
    the integer $n$ can be written as the sum of an odd prime and two positive cubes.
\end{lemma}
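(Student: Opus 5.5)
Write $\alpha_0 = n-t^3$ and $\alpha_1 = n-(t-1)^3 = \alpha_0 + 3t^2-3t+1$, where $0\le \alpha_0 < 3t^2+3t+1$. The aim is to reduce both conditions to finitely many Diophantine equations in the single variable $t$, and to solve each one.

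\textbf{Condition (1).} Here $\alpha_1\in\ex_3$, so $3t^2-3t+1\le \alpha_1\le \max\ex_3 = 78526384$. This bounds $t$ by roughly $5117$, and hence bounds $n$ by about $t^3+3t^2+3t < 1.4\cdot 10^{11}$. The set of such $n$ is then finite and explicit. I would enumerate it directly: loop over $e'\in\ex_3$ and over $t$ with $3t^2-3t+1\le e'$, set $n=(t-1)^3+e'$, and keep $n$ only if $\lfloor n^{1/3}\rfloor=t$ and $n-t^3\in\ex_3$. This costs about $7058\cdot 5117$ checks, which is trivial.

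\textbf{Condition (2).} Again $\alpha_1=e'\in\ex_3$. Now $\alpha_0=a^3$ gives
\[ a^3 = e' - (3t^2-3t+1), \]
and $a\ge 0$ forces $3t^2-3t+1\le e'$. So $t$ is bounded by the same quantity, and no elliptic curve machinery is needed here. For each $e'\in\ex_3$ and each admissible $t$, I check whether $e'-(3t^2-3t+1)$ is a nonnegative cube less than $3t^2+3t+1$; the latter condition is what guarantees $t=\lfloor n^{1/3}\rfloor$. This again yields an explicit finite list.

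\textbf{Conclusion.} For the combined list, I run the same brute-force search used in the proof of Theorem \ref{thm:prime two squares}. For each $n$ I search over $1\le y\le x$ with $x^3+y^3<n$ for a value $n-x^3-y^3$ that is an odd prime. Every $n$ with no representation must lie in the displayed set $\{2,3,8,\dots,308\}$, and all other listed $n$ get an explicit representation. Since $n$ can be as large as about $10^{11}$, I would organise the search so that for each $n$ it stops at the first prime found, which should happen almost immediately for large $n$.

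\textbf{Main obstacle.} The main difficulty is mostly computational bookkeeping. One must get the floor-consistency condition $0\le n-t^3<3t^2+3t+1$ exactly right, so that the list is complete. One must also handle small $t$ (for instance $t=1$, where $(t-1)^3=0$) consistently with the definition of $\ex_3$. Finally, one must confirm the exact finite set of failures. Unlike the square case in Lemma \ref{lem:exception three squares}, no integral points on elliptic curves are needed, because the $\ex_3$ membership of $\alpha_1$ alone already bounds $t$.
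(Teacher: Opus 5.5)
Your proposal is correct and follows the paper's proof essentially step for step. In both conditions you bound $t$ through $3t^2-3t+1\le \max\ex_3$; the paper phrases condition (1) via the difference $e_2-e_1=3t^2-3t+1$, which is equivalent. You then enumerate the finite list with the floor-consistency check and confirm representability by direct search. As in the paper, the exact failure set $\{2,3,8,\dots,308\}$ is an output of that computation rather than something derived by hand.
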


\begin{proof}
    Let $n$ be a positive integer, and let $t=\lfloor n^{1/3}\rfloor$. We examine each condition in turn.
    \begin{enumerate}
        \item Let $e_1 = n-t^3$ and $e_2 = n-(t-1)^3$ with $e_1, e_2\in \ex_3$. Note that $e_2 > e_1$, and 
        \[
        e_2-e_1 = n-(t-1)^3 - (n-t^3) = 3t^2-3t+1.
        \]
        Since the set $\ex_3$ is finite, there are only finitely many possibilities for $e_2-e_1$. There are at most finitely many values of $t$ such that $3t^2-3t+1=e_2-e_1$ for any fixed $e_1, e_2\in \ex_3$. Thus, there are only finitely many $n$ for which this condition holds\footnote{See our code here: \url{https://github.com/kapplegate2020/Sums-of-a-Prime-with-Squares-or-Cubes/blob/main/prime_cube_cube_code/lemma3_3/lemma3_3_1.py}}. In particular, we determined that there are $32734$ integers $n$ with this property\footnote{Complete list: \url{https://github.com/kapplegate2020/Sums-of-a-Prime-with-Squares-or-Cubes/blob/main/prime_cube_cube_code/lemma3_3/lemma3_3_1_possible_ns.txt}}. The only such $n$ that cannot be written as the sum of an odd prime and two positive cubes are the following:
\begin{align*}
    n \in \{3, 10, 11, 17, 18, 24, 30, 34, 36, 37, 44, 60, 74, 86, 90, 93, 100, 130, 210, 252, 308\}.
\end{align*}
        
        \item Assume $a^3 = n-t^3$ for some nonnegative integer $a$. Let $e = n-(t-1)^3$ and assume $e\in \ex_3$. Since $n\geq t^3$, we can write
        \[
        e = n-(t-1)^3 \geq t^3-(t-1)^3 = 3t^2-3t+1.
        \]
        However, since $\ex_3$ is finite, $e$ is bounded above. Thus, once $n$ is sufficiently large such that $3t^2-3t+1$ is larger than any element of $\ex_3$, then it is impossible for the above condition to hold for that $n$. Thus, there are only finitely many $n$ for which this condition holds\footnote{See our code here: \url{https://github.com/kapplegate2020/Sums-of-a-Prime-with-Squares-or-Cubes/blob/main/prime_cube_cube_code/lemma3_3/lemma3_3_2.py}}. We computed that there are 766 integers $n$ with this property\footnote{Complete list: \url{https://github.com/kapplegate2020/Sums-of-a-Prime-with-Squares-or-Cubes/blob/main/prime_cube_cube_code/lemma3_3/lemma3_3_2_possible_ns.txt}}. Of these, only $n=2$ and $n=8$ cannot be written as the sum of an odd prime and two positive cubes.\qedhere
    \end{enumerate}
\end{proof}

\begin{lemma}\label{lem: cube, elliptic curve}
    Let $n$ be a positive integer and let $t=\lfloor n^{1/3}\rfloor$. There is an explicit finite list of $n$ such that $n-t^3\in \ex_3$ and $n-(t-1)^3=a^3$ for some positive integer $a$. Furthermore, all such $n$ can be written as the sum of an odd prime and two positive cubes.
\end{lemma}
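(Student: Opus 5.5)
Fix $e\in\ex_3$ and suppose $n-t^3=e$ and $n-(t-1)^3=a^3$ with $a\geq 1$. Subtracting the two relations eliminates $n$:
\[
a^3 = e + t^3-(t-1)^3 = 3t^2-3t+1+e .
\]
For each fixed $e$ this is a curve $a^3 = 3t^2-3t+1+e$, cubic in $a$ and quadratic in $t$. Put $s=2t-1$, so that $3t^2-3t+1 = (3s^2+1)/4$. Multiplying by $4$ gives
\[
4a^3 = 3s^2 + 1 + 4e .
\]
Multiplying further by $2^4\cdot 3^3 = 432$ and setting $X = 12a$, $Y = 36s$ yields
\[
Y^2 = X^3 - 144(1+4e).
\]
This is a Mordell curve $Y^2 = X^3 + k_e$ with $k_e = -144(4e+1)\neq 0$. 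The map $(t,a)\mapsto (12a,\,36(2t-1))$ is injective, so every admissible $n$ comes from an integral point $(X,Y)$ with $12\mid X$, $X>0$, and $Y\equiv 36 \pmod{72}$.

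Step one is to compute all integral points on $Y^2=X^3+k_e$ in SageMath for each of the 7058 values $e\in\ex_3$, as was done for the curves $E_e$ in Lemma \ref{lem:exception three squares}. By Siegel's theorem each set of integral points is finite, which proves the finiteness claim. Step two is to keep only the points meeting the congruence conditions, recover $a=X/12$ and $t=(Y/36+1)/2$, and set $n=t^3+e$. We then discard any candidate with $t\neq\lfloor n^{1/3}\rfloor$, that is, any with $e\geq 3t^2+3t+1$. Only finitely many small $t$ can fail this test, because $e\le 78526384$. This produces the explicit list. Step three is to check each surviving $n$ directly by computer, searching for an odd prime $p$ and positive $x,y$ with $n=p+x^3+y^3$, exactly as in the direct verification of Lemma \ref{lem:cube}.

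The main obstacle is step one. The coefficients $|k_e|$ reach roughly $4.5\cdot 10^{10}$, and there are thousands of curves. Provably determining all integral points needs the Mordell--Weil group, which means a rank computation and generators. For some $e$ the rank may be hard to certify, for example when Sha is nontrivial or 2-descent is inconclusive. I would first try Sage's \texttt{integral\_points} on each curve, falling back on Magma's \texttt{IntegralPoints} or its dedicated Mordell-curve solver where Sage struggles. A cheaper alternative for the largest $e$ is to note that $n\geq 3t^2-3t+1\cdot 0 + t^3$ forces $t$ to be small relative to $e$ only if we use the floor condition. The floor condition $e<3t^2+3t+1$ bounds $t$ only from below, so this gives no bound and the curve computation cannot be avoided. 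If a few curves resist a provable analysis, I would isolate them and attempt them separately, for instance by descent over $\mathbb{Q}(\sqrt{-3})$ using the factorization $3s^2+1+4e$.
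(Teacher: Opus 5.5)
Your route is the paper's: for each $e\in\ex_3$, turn $3t^2-3t+1+e=a^3$ into an integral-point problem on an elliptic curve, invoke finiteness of integral points, filter by congruences and by $e<3t^2+3t+1$, and then verify the finitely many resulting $n$ by direct search. The paper's primary model is $y^2-9y=x^3-27(e+1)$ at $(3a,9t)$, and it uses the Mordell model only for the hard curves, but that difference is cosmetic.

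\textbf{Wrong constant.} Your Mordell constant is wrong. From $4a^3=3s^2+4e+1$ with $X=12a$ and $Y=36s$,
\[
X^3-Y^2=1728a^3-1296s^2=432\,(4a^3-3s^2)=432\,(4e+1).
\]
So the curve is $Y^2=X^3-432(4e+1)$, which is exactly the paper's $K=-432(4e+1)$, not $-144(4e+1)$. For example, with $t=1$, $a=2$, $e=7$ you get $24^3-36^2=12528=432\cdot 29$. Since $432/144=3$ is not a sixth power, your curve is not even $\mathbb{Q}$-isomorphic to the correct one. The points $(12a,36(2t-1))$ do not lie on it, so Step one as written would compute the integral points of unrelated curves. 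The fix is immediate, and with it $|K|$ runs up to about $1.36\cdot 10^{11}$ rather than $4.5\cdot 10^{10}$.

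\textbf{Fallback for the hard curves.} Magma's \texttt{IntegralPoints} is also an elliptic-logarithm method and needs a provable Mordell--Weil basis. It can therefore hit the same obstruction that defeated Sage on 865 of the 7058 curves in the paper.

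The paper's remedy needs no rank or generators at all. It uses the Bennett--Ghadermarzi method: enumerate the binary cubic forms of discriminant $-108K$ and solve the associated cubic Thue equations. The irreducible forms are taken in reduced form, with coefficients confined to intervals of length $\lessapprox|K|^{1/4}$; the reducible ones are handled separately. Your descent suggestion is the classical relative of this. However, the natural factorization is $(Y-\sqrt{K})(Y+\sqrt{K})=X^3$ in $\mathbb{Q}(\sqrt{-3(4e+1)})$, not in $\mathbb{Q}(\sqrt{-3})$.

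The aside about bounding $t$ via the floor condition can simply be dropped, as you yourself conclude.
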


\begin{proof}
    Let $e = n-t^3$ and assume $e\in \ex_3$. Assume $n-(t-1)^3=a^3$ for some $a\in \Z$. By solving for $n$, we write
    \begin{equation}
    t^3 + e = (t-1)^3 + a^3. \label{eq:exception cube elliptic curve}
    \end{equation}
    By expanding and rearranging, we get
    \begin{equation}\label{eq:elliptic curve}
    3t^2 -3t +(e+1) = a^3.
    \end{equation}
    Multiplying through by 27 to make things monic yields
    \[
    (9t)^2 -9(9t) +27(e+1) = (3a)^3.
    \]
    Thus, a solution to \eqref{eq:exception cube elliptic curve} corresponds to a point $(3a, 9t)$ on the elliptic curve
    \[
    E_e: y^2 -9y = x^3-27(e+1).
    \]
    Since there are only finitely many integral points on each elliptic curve (see, e.g., \cite[Chapter IX]{Sil2009}), and only finitely many $e\in \ex_3$ that we can choose, then there are only finitely many $n$ for which this condition holds\footnote{\url{https://github.com/kapplegate2020/Sums-of-a-Prime-with-Squares-or-Cubes/blob/main/prime_cube_cube_code/lemma3_4/lemma3_4part1.sage}}. We computed that there are at most 1111 such $n$\footnote{Complete list: \url{https://github.com/kapplegate2020/Sums-of-a-Prime-with-Squares-or-Cubes/blob/main/prime_cube_cube_code/lemma3_4/possible_ns.txt}}. All of them, however, can be written as the sum of an odd prime and two positive cubes\footnote{\url{https://github.com/kapplegate2020/Sums-of-a-Prime-with-Squares-or-Cubes/blob/main/prime_cube_cube_code/lemma3_4/lemma3_4part2.py}}. Computing these $1111$ integers $n$ was the most computationally-intensive part of this lemma. See Section \ref{sec:elliptic curves} for more details.
\end{proof}

With the previous lemma in hand, we prove Theorem \ref{thm:prime two cubes}.
\begin{proof}[Proof of Theorem \ref{thm:prime two cubes}]
    Let $N_3$ be as in the statement of the theorem. Let $n$ be a positive integer with 
    \[n\notin \{1, 2, 3, 4, 6, 8, 10, 11, 17, 18, 24, 30, 34, 36, 37, 44, 60, 74, 86, 90, 93, 100, 130, 210, 252, 308 \}\]
    and $n < \left(\frac{N_3-2}{6}\right)^{3/2}$. The result follows by easy calculation if $n \leq 7$, so we may assume $n\geq 8$. We set $t=\lfloor n^{1/3}\rfloor$, and note that $t-1 > 0$.

We first observe that
    \begin{align*}
        n-t^3 <n-(t-1)^3 &< (t+1)^3 - (t-1)^3= 6t^2+2\leq 6n^{2/3}+2< N_3.
    \end{align*}
    Thus, by assumption, each of the values $n-t^3$ and $n-(t-1)^3$ is either a cube, is an element of $\ex_3$, or can be written as the sum of an odd prime and a positive cube. If either can be written as the sum of an odd prime and a positive cube, or if both are cubes, then we are done. The other cases are handled by Lemmas \ref{lem:cube} and \ref{lem: cube, elliptic curve}, since we excluded all the $n$ that cannot be written as the sum of an odd prime and two positive cubes.
\end{proof}

The obvious place where the conclusion of Theorem \ref{thm:prime two cubes} is lacking is the case when $n-t^3$ and $n-(t-1)^3$ are both cubes. (See Section \ref{sec:cubes} for further discussion on this point.) However, we can obtain the following result with some computational effort.
\begin{corollary}
    For all positive integers $308<n<10^{16}$, we have $n=p+x^3+y^3$, where $p$ is an odd prime and $x, y$ are positive integers.
\end{corollary}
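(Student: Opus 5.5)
We will apply Theorem \ref{thm:prime two cubes} with $N_3 = 10^{12}$, which is allowed by Theorem \ref{thm:prime cube}. Since
\[
\left(\frac{10^{12}-2}{6}\right)^{3/2} \approx (1.667\cdot 10^{11})^{3/2} \approx 6.8\cdot 10^{16} > 10^{16},
\]
every $n$ with $308<n<10^{16}$ satisfies the hypotheses of that theorem, and $n$ avoids the excluded list because every element of that list is at most $308$. The theorem then leaves only one bad case. Either $n$ is already a sum of an odd prime and two positive cubes, or, writing $t=\lfloor n^{1/3}\rfloor$, both $n-t^3$ and $n-(t-1)^3$ are cubes.

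The remaining work is to enumerate all $n<10^{16}$ in this bad case and check each one directly. Write $n-t^3=a^3$ and $n-(t-1)^3=b^3$ with $a,b\geq 0$. Subtracting gives
\[
b^3-a^3 = 3t^2-3t+1,
\]
and the condition $t^3\le n<(t+1)^3$ gives $0\le a^3 < 3t^2+3t+1$. This is a quadratic in $t$, so $a\ll t^{2/3}$ and $t< 2.16\cdot 10^5$. The plan is a double loop. For each $t$ up to $\lfloor (10^{16})^{1/3}\rfloor$ and each $a$ with $a^3<3t^2+3t+1$, test whether $a^3+3t^2-3t+1$ is a perfect cube. This is about $\sum_t t^{2/3}\approx 10^{9}$ cheap operations. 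Each hit gives a candidate $n=t^3+a^3$.

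For each candidate, we then search directly for a representation $n=p+x^3+y^3$. We try small cubes $x^3+y^3$ with $x,y$ up to a modest bound and test whether $n-x^3-y^3$ is an odd prime, using a deterministic Miller--Rabin test valid below $10^{16}$. We expect the candidates to be relatively few, since they are integral points on a family of cubic curves of the shape $b^3-a^3=3t^2-3t+1$. Heuristically each one should have many representations, so a prime should turn up quickly.

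The main obstacle is the size of the enumeration. If the loop over $(t,a)$ is too slow, we would instead parametrize by $a$ and $b$. The condition $n-t^3=a^3$ forces $b^3-a^3\ge 3t^2-3t+1$ with $b>a$, and for each $a$ one solves the quadratic in $t$ exactly, which reduces the work further. A second concern is that some candidate might fail to be a sum of an odd prime and two positive cubes. That would disprove Conjecture \ref{conj:prime cubes}, so we do not expect it. If the search for a candidate is slow, we would widen the ranges of $x$ and $y$ until a prime is found.
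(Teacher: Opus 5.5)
Your proposal is correct and is essentially the paper's proof. You apply Theorem \ref{thm:prime two cubes} with $N_3=10^{12}$, which Theorem \ref{thm:prime cube} permits, then brute-force over pairs $(t,a)$ with $a^3<3t^2+3t+1$ to find the residual $n$ for which both $n-t^3$ and $n-(t-1)^3$ are cubes, and check those directly. The paper differs only in speeding up the $(t,a)$ loop with a congruence filter modulo $351$; your fallback of enumerating pairs $(a,b)$ and solving $3(2t-1)^2=4(b^3-a^3)-1$ for $t$ would actually cut the work to roughly $N^{4/9}$ pairs, compared with $N^{5/9}$ for the $(t,a)$ loop.
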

\begin{proof}
    This almost follows from Theorem \ref{thm:prime cube} and Theorem \ref{thm:prime two cubes}. It only remains to show that for all $n<10^{16}$, if $t=\lfloor n^{1/3}\rfloor$ and if the integers $n-t^3=a^3$ and $n-(t-1)^3=b^3$ are both cubes, then $n$ can be written as the sum of an odd prime and two positive cubes. We verify this computationally, as follows.

    First, we can loop through all $t$ with $1\leq t\leq \sqrt[3]{10^{16}}$. Then, we can loop through all $a$ within $0\leq a^3< 3t^2+3t+1$, since $t^3+a^3=n < (t+1)^3$. Searching over all such pairs $(t,a)$ corresponding to $n \leq N$ has a complexity of $\approx \frac{3^{4/3}}{5}N^{5/9}$, which is within reach of a brute-force search. 
    
    We speed up the brute-force search using some congruence arguments. We have $t^3+a^3-(t-1)^3=b^3$, so $t^3+a^3-(t-1)^3$ must be a cube modulo $q$ for every positive integer $q$. We found all pairs $(u,v)$ modulo 351 such that $u^3+v^3-(u-1)^3$ is a cube modulo 351, and then for each pair we can impose the conditions $t\equiv u \pmod{351}, a \equiv v \pmod{351}$ in our search. This provides about a fivefold speed up. There are other good choices besides 351 that can be used, but we chose 351 to balance the size of the modulus and the percentage of pairs that can be skipped\footnote{See our code here: \url{https://github.com/kapplegate2020/Sums-of-a-Prime-with-Squares-or-Cubes/blob/main/prime_cube_cube_code/corollary3_5.py}}. 
\end{proof}

\section{Computational Methods}\label{sec:computational methods}

In this section we explore the computational methods that were used throughout the paper. There were two main computational efforts that were needed. The first was the verification of which numbers can be written as the sum of a prime and a square (or cube). The second was working with elliptic curves to deal with the cases in Lemma \ref{lem:exception three squares} and Lemma \ref{lem: cube, elliptic curve}.

\subsection{Remarks on the proofs of Theorems \ref{thm:prime square} and \ref{thm:prime cube}}\label{subsec:direct computations}

In order to speed up the computations in proving Theorem \ref{thm:prime square}, we simultaneously check several integers at once for representability as the sum of an odd prime and a positive square\footnote{See our code here: \url{https://github.com/kapplegate2020/Sums-of-a-Prime-with-Squares-or-Cubes/tree/main/prime_power_code}}. In particular, we make use of bitmaps of 64 bits to verify 64 numbers at once. First, for $k\geq0$ we make a 64-bit bitmap that represents all the odd numbers from $128k$ and $128(k+1)$ and marks the primes with a 1. Then, for each $k\geq0$, we look at the range of odd integers between $128k$ and $128(k+1)$. We compute all the even squares less than $128(k+1)$ and we OR a shifted version of the appropriate prime bitmap with the odd bitmap to mark sums of the even square and all the primes. We continue moving down through even squares until we run out of squares or the entire bitmap is ones. We do a very similar process for all the even integers between $128k$ and $128(k+1)$.

In our computations, we used a list of all the primes up to $10^{10}$. However, as the integers get larger, we eventually reach numbers that cannot be represented as the sum of a positive square  and an odd prime less than $10^{10}$. These values are initially relatively rare, so we deal with them individually, subtracting squares and testing the difference for primality. We used a deterministic version of the Miller--Rabin primality test (essentially due to Sorenson and Webster \cite{SW2017}) since the values we were testing were sufficiently small.

When dealing with cubes in Theorem \ref{thm:prime cube}, we did an almost identical computation. The biggest roadblock with cubes is that we hit many more integers which required a prime larger than $10^{10}$ much sooner. Because of that, we were only able to verify that non-cubes can be written as the sum of a prime and a cube up to $10^{12}$, while we were able to go much further for squares.

We remark that we also looked at sums of an odd prime and a fourth power, and found that there are 215,414,496 positive integers below $10^{10}$ that are exceptions (i.e. they are not fourth powers and cannot be expressed as the sum of a prime and positive fourth powers). Although we have not done an exhaustive search, we have also found exceptions as large as $10^{14}$. We would conjecture that this set is finite, but we are not sure how large it might be.

\subsection{Remarks on the proof of Lemma \ref{lem: cube, elliptic curve}}\label{sec:elliptic curves}
In most cases, we used the SageMath elliptic curve method \texttt{integral\_points()} to compute the integer points of various elliptic curves to verify Lemma \ref{lem:exception three squares} and Lemma \ref{lem: cube, elliptic curve}. However, in 865 of the 7058 elliptic curves involved in the proof of Lemma \ref{lem: cube, elliptic curve}, the method \texttt{integral\_points()} failed to successfully compute a Mordell-Weil basis, and thus fails to accurately find all integer points. This failure happened more frequently with elliptic curves with larger coefficients. In this case, we reverted to a method described by Bennett and Ghadermarzi \cite{Bennett_Ghadermarzi_2015} to find integral points on Mordell curves\footnote{See our code here: \url{https://github.com/kapplegate2020/Sums-of-a-Prime-with-Squares-or-Cubes/blob/main/prime_cube_cube_code/lemma3_4/mordellCurve.sage}}. We refer the reader to that paper for more precise details than we give below.

Recall that a Mordell curve is an elliptic curve of the form
\begin{equation}\label{eq:mordell curve}
y^2 = x^3+K,
\end{equation}
where $K$ is a nonzero integer. Recall also from \eqref{eq:elliptic curve} that $3t^2 -3t +(e+1) = a^3$, where $e\in \ex_3$ and $t$ and $a$ are positive integers. If we complete the square and multiply through by $1728$ to clear the leading coefficients, we get 
\[
(36(2t-1))^2 = (12a)^3-432(4e+1).
\]
So, by substituting $y=36(2t-1)$, $x=12a$ and $K=-432(4e+1)$, we have an equation in the form of \eqref{eq:mordell curve}. To solve the equation using the method described by Bennett and Ghadermarzi, we must find all binary cubic forms 
\begin{align}\label{eq:F binary cubic form}
F(x, y)=ax^3+3bx^2y+3cxy^2+dy^3    
\end{align}
with discriminant $D=-108K$, where $D$ is given as
\begin{equation}\label{eq:discriminant}
D = -27(a^2d^2-6abcd-3b^2c^2+4ac^3+4b^3d).
\end{equation}
Furthermore, for each binary cubic form, we must then solve an associated cubic Thue equation (see \cite[Section 2]{Bennett_Ghadermarzi_2015} for the precise correspondence). We note that $F$ has positive discriminant, since $K = -432(4e+1)$ is negative.

The cubic forms $F$ need not be irreducible, so we compute reducible and irreducible forms separately.

Every irreducible binary cubic form as in \eqref{eq:F binary cubic form} is $\text{GL}_2(\mathbb{Z})$-equivalent to a unique reduced form \cite[Proposition 3.2]{Bennett_Ghadermarzi_2015}, and one can determine explicit bounds on the variables $a,b,c,d$ of a reduced form \cite[Lemma 3.3]{Bennett_Ghadermarzi_2015}. Each variable is an element of an interval of length $\lessapprox |K|^{1/4}$. We fix $K$ and loop over possible values of $a,b$, and $c$. With $a,b,c,K$ all fixed, \eqref{eq:discriminant} is a quadratic equation in $d$ and hence $d$ is easily determined. Thus, for fixed $K$, the total complexity of finding all irreducible cubic forms as in \eqref{eq:F binary cubic form} with discriminant $-108K$ is $\lessapprox |K|^{3/4}$.

Because of the structure of $D$, we know that it is divisible by $108 \cdot 432=46656=2^6\cdot 3^6$. However, the $-27$ already contributes a $3^3$ to $D$. Thus, by plugging in all the possibilities of $a, b, c, d$ modulo $2^6 \cdot 3^3$, we can see which tuples could produce a $D$ that is divisible by $46656$. To do this, we construct a dictionary whose keys are tuples $(a, b) \pmod{2^6}$ and whose value is a list of possible residues of $c\pmod{2^6}$. To compute this dictionary, we went through every tuple $(a, b, c)\pmod{2^6}$. We then loop through every $d\pmod{2^6}$ and check if there is some $d$ such that \eqref{eq:discriminant} can be satisfied modulo $2^6$. If there is, then we add $c$ to the list in the dictionary keyed by $(a, b)$. Once we have the list of possible $c$ residues, we can sometimes reduce the list to save space. For example, we could collapse the residue list $\{0, 1, 16, 17, 32, 33, 48, 49\}\pmod{64}$ to the list $\{0, 1\} \pmod{16}$ to save space and time.

We can perform a similar operation modulo $3^3$ to generate another dictionary. Then, as we loop through every $a$ and $b$ in the brute force, we use the congruence conditions to skip some values of $c$. In practice, only about $28.6\%$ of the congruence classes for $c$ survived, which produces about a three times speed up. 

A reducible binary cubic form $F$ with discriminant $D = -108K$ as in \eqref{eq:F binary cubic form} is $\text{SL}_2(\mathbb{Z})$-equivalent to $f(x,y) = x(x^2+3Bxy+3Cy^2)$ for some integers $B$ and $C$ with $D = 27C^2(3B^2-4C)$ (see \cite[(3.1) and (3.2)]{Bennett_Ghadermarzi_2015}). The variable $C$ lives in an interval of length $\lessapprox |K|^{1/2}$ (see \cite[Lemma 3.7]{Bennett_Ghadermarzi_2015}). When $K$ and $C$ are fixed, we may then find $B$ by solving a quadratic equation, so the complexity of determining all reducible forms is $\lessapprox |K|^{1/2}$.

Once we find all the reducible and irreducible forms, then we can use the built-in Sage method \texttt{gp.thueinit} to set up and solve the related Thue equation. Following the work in Bennett and Ghadermarzi paper, we can solve for the integral points.

\section{Cube problems}\label{sec:cubes}
Theorem \ref{thm:prime two squares} is a relatively clean ``bootstrapping'' result for squares. A key point in the proof is that, given a large $n$ and $t = \lfloor \sqrt{n}\rfloor$, not all of $n-t^2,n-(t-1)^2,n-(t-2)^2,n-(t-3)^2$ can be squares. Similarly, the conclusion of Theorem \ref{thm:prime two cubes} would be greatly enhanced if one could prove the following conjecture.

\begin{conjecture}
    Let $n>8$ be a positive integer, and let $t=\lfloor n^{1/3}\rfloor$. Then $n-t^3$, $n-(t-1)^3$, and $n-(t-2)^3$ are not all cubes.
\end{conjecture}

Perhaps one could even prove the following stronger conjecture.
\begin{conjecture}\label{conj:3 cubes}
    For any positive integers $a, b, c, t$, the integers $t^3+a^3$, $(t-1)^3+b^3$, and $(t-2)^3+c^3$ are not all equal.
\end{conjecture}

If we drop the condition in Conjecture \ref{conj:3 cubes} that the integers are positive, then there are trivial solutions with $a=-t$, $b=-(t-1)$ and $c=-(t-2)$. However, there is also a ``nontrivial'' solution with $t=6, a=-3, b=4, c=5$. We were unable to find any other nontrivial solutions. A simple brute force over $t$ and $a$ shows that there are no counterexamples to Conjecture \ref{conj:3 cubes} with $t<10^{4}$.

One natural approach in attempting to prove Conjecture \ref{conj:3 cubes} is to try to parameterize solutions to
\begin{equation}\label{eq:cubes}
t^3+a^3=(t-1)^3+b^3 
\end{equation}
and then somehow show that $t^3+a^3-(t-2)^3$ is not a cube. In our efforts to parameterize these curves, we did find some polynomially-parameterized families, but these families did not account for all the solutions to \eqref{eq:cubes}.

The first, and simplest, family that we found was the one-parameter family
\begin{align}\label{eq:one param family}
    (3x^3-3x^2+2x)^3 + x^3 = (3x^3-3x^2+2x-1)^3+(3x^2-2x+1)^3
\end{align}
where $x$ is a positive integer. In light of Conjecture \ref{conj:3 cubes}, it is natural to ask whether
\[
(3x^3-3x^2+2x)^3 + x^3 - (3x^3-3x^2+2x-2)^3
\]
can ever be a cube. This is essentially equivalent to determining all the integral points on the superelliptic curve
\begin{align*}
    c^3 &= (3x^3-3x^2+2x)^3 + x^3 - (3x^3-3x^2+2x-2)^3\\
    &= 54x^6-108x^5+126x^4-107x^3+60x^2-24x+8,
\end{align*}
which is already a nice challenge. It seems likely the only integral point is $(x,c) = (0,2)$.

With a great deal of searching, we were able to expand this one-parameter family into a two-parameter family, where
\begin{align*}
t &= t_3x^3+t_2x^2+t_1x+t_0,\ \ \ \ \ a = a_2x^2+a_1x+y, \ \ \ \ \ b = 3(t_0^3+1)x^2+3t_0^2x+t_0,
\end{align*}
and
\begin{align*}
    t_3 &= 9y^4+18y^3+27y^2+18y+9, & a_2 &= 3y^3-3,\\
    t_2 &= 12y^3+18y^2+18y+6, & a_1 &= 3y^2,\\
    t_1 &= 6y^2+6y+3, & t_0 &= y+1,
\end{align*}
where $x$ and $y$ are positive integers. To get the original family \eqref{eq:one param family} from this extended family, we first set $y=1$. This gives
\begin{align*}
    t = 81x^3 + 54x^2 + 15x + 2, \ \ \ \ a = 3x+1, \ \ \ \ b = 27x^2 + 12x + 2.
\end{align*}
We then set $u=3x+1$ to get $t = 3u^3-3u^2+2u, a=u, b = 3u^2-2u+1$.

\section*{Acknowledgments}

The second author is partially supported by the National Science Foundation (DMS-2418328) and the Simons Foundation (MPS-TSM-00007959).

We thank the Office of Research Computing at Brigham Young University for providing us with access to high-performance computing resources. We used those resources to prove Theorem \ref{thm:prime square}, Theorem \ref{thm:prime cube}, and Lemma \ref{lem: cube, elliptic curve}.

We used ChatGPT 5.6 Sol for proofreading, checking computer code for bugs, and catching errors in earlier versions of the manuscript.

\bibliographystyle{plain}
\bibliography{refs.bib}

\begin{thebibliography}{10}

\bibitem{code}
K.~Applegate.
\newblock Sums of a prime with squares or cubes.
\newblock
  \url{https://github.com/kapplegate2020/Sums-of-a-Prime-with-Squares-or-Cubes},
  2026.
\newblock GitHub repository, accessed June 2026.

\bibitem{Bennett_Ghadermarzi_2015}
M.~A. Bennett and A.~Ghadermarzi.
\newblock Mordell’s equation: a classical approach.
\newblock {\em LMS Journal of Computation and Mathematics}, 18(1):633–646,
  2015.

\bibitem{BW2026}
J.~Br\"udern and T.~D. Wooley.
\newblock Partitio {N}umerorum: sums of a prime and a number of {$k$}-th
  powers.
\newblock {\em J. Eur. Math. Soc. (JEMS)}, 28(7):2849--2875, 2026.

\bibitem{Dav2000}
H.~Davenport.
\newblock {\em Multiplicative number theory}, volume~74 of {\em Graduate Texts
  in Mathematics}.
\newblock Springer-Verlag, New York, third edition, 2000.
\newblock Revised and with a preface by Hugh L. Montgomery.

\bibitem{HL1923}
G.~H. Hardy and J.~E. Littlewood.
\newblock Some problems of `{P}artitio numerorum'; {III}: {O}n the expression
  of a number as a sum of primes.
\newblock {\em Acta Math.}, 44(1):1--70, 1923.

\bibitem{Helfgott}
H.~A. Helfgott.
\newblock The ternary {G}oldbach problem, 2015.
\newblock https://arxiv.org/abs/1501.05438.

\bibitem{Hooley1957}
C.~Hooley.
\newblock On the representation of a number as the sum of two squares and a
  prime.
\newblock {\em Acta Math.}, 97:189--210, 1957.

\bibitem{Linnik1963}
Ju.~V. Linnik.
\newblock {\em The dispersion method in binary additive problems}.
\newblock American Mathematical Society, Providence, RI, 1963.
\newblock Translated by S. Schuur.

\bibitem{Goldbach2014}
T.~Oliveira~e Silva, S.~Herzog, and S.~Pardi.
\newblock Empirical verification of the even {G}oldbach conjecture and
  computation of prime gaps up to {$4\cdot 10^{18}$}.
\newblock {\em Math. Comp.}, 83(288):2033--2060, 2014.

\bibitem{Siksek2016}
S.~Siksek.
\newblock Every integer greater than 454 is the sum of at most seven positive
  cubes.
\newblock {\em Algebra Number Theory}, 10(10):2093--2119, 2016.

\bibitem{Sil2009}
J.~H. Silverman.
\newblock {\em The arithmetic of elliptic curves}, volume 106 of {\em Graduate
  Texts in Mathematics}.
\newblock Springer, Dordrecht, second edition, 2009.

\bibitem{SW2017}
J.~Sorenson and J.~Webster.
\newblock Strong pseudoprimes to twelve prime bases.
\newblock {\em Math. Comp.}, 86(304):985--1003, 2017.

\bibitem{VW2000}
R.~C. Vaughan and T.~D. Wooley.
\newblock Waring's problem: a survey.
\newblock In {\em Number theory for the millennium, {III} ({U}rbana, {IL},
  2000)}, pages 301--340. A K Peters, Natick, MA, 2002.

\bibitem{Vin1937}
I.~M. Vinogradov.
\newblock Some theorems concerning the theory of primes.
\newblock {\em Rec. Math. Moscou, n. Ser.}, 2:179--195, 1937.

\bibitem{Vin2004}
I.~M. Vinogradov.
\newblock {\em The method of trigonometrical sums in the theory of numbers}.
\newblock Dover Publications, Inc., Mineola, NY, 2004.
\newblock Translated from the Russian, revised and annotated by K. F. Roth and
  Anne Davenport, Reprint of the 1954 translation.

\end{thebibliography}

\end{document}